\newcommand{\powser}[1]{[\![#1]\!]}
\newcommand{\pdiv}{$p$-divisible }
\newcommand{\G}{\mathbb{G}}
\newcommand{\Q}{\mathbb{Q}}
\newcommand{\Z}{\mathbb{Z}}
\newcommand{\QZ}{\Q_p/\Z_p}  
\newcommand{\Zp}[1]{\Z/p^{#1}}
\newcommand{\al}{\alpha}
\newcommand{\Lk}{\Lambda_k}
\newcommand{\lra}[1]{\overset{#1}{\longrightarrow}}
\newcommand{\Prod}[1]{\underset{#1}{\prod}}
\newcommand{\Coprod}[1]{\underset{#1}{\coprod}}
\newcommand{\Colim}[1]{\underset{#1}{\colim}}
\newcommand{\E}{E_{n}}
\def \mmod{/\mkern-3mu /}
\title{An Introduction to HKR Character Theory}
\author{
Nathaniel Stapleton \\   
}
\date{\today}
\theoremstyle{definition} 
\DeclareMathOperator{\Aut}{Aut}
\DeclareMathOperator{\GL}{GL}
\DeclareMathOperator{\im}{im}
\DeclareMathOperator{\Fix}{Fix}
\DeclareMathOperator{\colim}{colim}
\DeclareMathOperator{\Spec}{Spec}
\DeclareMathOperator{\Spf}{Spf}
\DeclareMathOperator{\sub}{sub}
\DeclareMathOperator{\level}{level}
\newtheorem{thm}[subsubsection]{Theorem}
\newtheorem{example}[subsubsection]{Example}
\newtheorem{prop}[subsubsection]{Proposition}
\newtheorem{cor}[subsubsection]{Corollary}
\newtheorem{lemma}[subsubsection]{Lemma}
\newtheorem{definition}[subsubsection]{Definition}
\newtheorem*{problem}{Problem}
\newtheorem{surprise}[subsubsection]{Surprise}
\newtheorem*{solution}{Solution}
\newtheorem{note}[subsubsection]{Note}
\newtheorem*{remark}{Remark}
\begin{document}
\maketitle

\section{Motivation}

Traditionally, character theory provides a tool for understanding the representations of a finite group. The representations fit together to give the representation ring of the group and the character map is a ring map
\[
R(G) \lra{} Cl(G,\mathbb{C}),
\]
where $R(G)$ is the representation ring and $Cl(G, \mathbb{C})$ is the ring of class functions on $G$ taking values in $\mathbb{C}$. The representation ring shows up in equivariant cohomology as the equivariant $K$-theory of a point:
\[
K_{G}^{0}(\ast) \cong R(G).
\]
Equivariant $K$-theory is quite unique. It is geometrically defined, global equivariant, height $1$, and has an associated commutative group scheme 
\[
\G_m = \Spec K_{S^1}^{0}(\ast).
\]
At heights above $1$ no known analogues exist, although there has been an enormous amount of deep work at height $2$ (\cite{Henriques-Survey},\cite{Lurie-Survey},\cite{Stolz-Teichner-Survery2}).

The completion theorem of Atiyah and Segal \cite{AtiyahSegal} describes the canonical map
\[
K_{G}^{0}(\ast) \lra{} K^{0}(BG)
\]
in terms of the representation ring. The map is just completion at the ideal of representations of virtual dimension zero. The completion map need not be injective or surjective, but it does imply that the $K$-theory of $BG$ ``knows" something about representation theory (and $G$-bundles more generally). 

At a prime $p$ there are excellent higher chromatic analogues of $K$-theory given by height $n$ Morava $E$-theory $\E$. This is supposed to provide motivation for studying $\E^0(BG)$. It should contain a shadow of the inaccessible geometric equivariant cohomology theory that we believe lurks in the background at higher heights.

But how do we study $\E^0(BG)$? One way to go about this is to construct a higher height analogue of the character map: a map that begins with $\E^0(BG)$ and lands in a ring that is reminiscent of class functions on $G$. The character theory of Hopkins, Kuhn, and Ravenel does just this and it provides a useful mechanism with which to study higher height phenomena associated to $\E$. 

\paragraph*{Acknowledgements} This paper grew out of the notes for my talk on HKR character theory at the 2013 Talbot workshop and the paper is written with the Talbot participants in mind as the audience. I would like to thank everyone there for a wonderful week and for their constructive comments. Particularly I would like to thank Mark Behrens and Tyler Lawson for their mentorship, Dylan Wilson for listening to a pre-version of the talk, and Sebastian Thyssen for carefully reading the first version of this paper. This paper would not exist without Eric Peterson's insistence that I write it down and it owes a lot to conversations with David Gepner, Charles Rezk, and Chris Schommer-Pries. 

\begin{remark}
The results of this paper are due to Mike Hopkins, Nick Kuhn, and Doug Ravenel. The majority of the results are proved in Sections 5 and 6 of \cite{hkr}. It will be indicated when the result is from somewhere else. I hope that by filling this introduction with examples, some different points of view, and applications I have provided something that helps those who are trying to gain perspective on this subject. 
\end{remark}

\section{Introduction}
The character theory of Hopkins, Kuhn, and Ravenel is motivated by the following problem:

\begin{problem} \label{problem}
Let $\E$ be Morava E-theory. How can one approximate $\E^*(X)$ by rational cohomology?
\end{problem}

In order to attack this problem we need to understand what we mean by ``approximate" and what we mean by $X$. In this paper we will describe a complete solution to this problem for all spaces of the form $EG\times_G X$, in which $G$ is a finite group and $X$ is a $G$-space with the homotopy type of a finite $G$-CW complex.

Here is an easy case of the problem:

\begin{solution}
When $X$ has the homotopy type of a finite CW-complex the canonical map
\[
\E^*(X) \lra{} (p^{-1}\E)^*(X)
\]
becomes an isomorphism when the domain is tensored up to $(p^{-1}\E)^*$:
\[
(p^{-1}\E)^*\otimes_{\E^*}\E^*(X) \cong (p^{-1}\E)^*(X).
\]
\end{solution}
\begin{proof}
The ring $(p^{-1}\E)^*$ is a flat $\E^*$-algebra (it is also a $\Q$-algebra). Given a cohomology theory defined on finite CW-complexes, flat extension produces another cohomology theory. Note that this is not true for cohomology theories defined on all CW-complexes because infinite wedges turn into infinite products. Now it is completely clear that the isomorphism holds for a point and so we are done. 
\end{proof}

Problems arise immediately when the space $X$ is not finite:

Let $X = B\Z/p^k$. We can mimic the above construction. The map of spectra $\E \lra{} p^{-1}\E$ induces a map
\[
\E^*(B\Z/p^k) \lra{} (p^{-1}\E)^*(B\Z/p^k)
\]
and we can tensor the domain up to $(p^{-1}\E)^*$
\[
(p^{-1}\E)^* \otimes_{\E^*}\E^*(B\Z/p^k) \lra{} (p^{-1}\E)^*(B\Z/p^k).
\]
However, the codomain of this map is trivial (a rank one $(p^{-1}\E)^*$-module) because the rational cohomology of finite groups is trivial (by Maschke's theorem). The domain is a free $(p^{-1}\E)^*$-module of rank $p^{kn}$ because $\E^0(B\Z/p^k)$ is the global sections of the $p^k$-torsion of the formal group associated to $\E$. Thus we must modify the space that shows up in the codomain in order for this to have any chance of being an isomorphism. 

What is the real issue here? In order to get into it we must think about some algebraic geometry. Whenever possible, it is useful to think of the Morava $E$-theory of a space (and more generally the complex orientable cohomology of a space) algebro-geometrically. Although this may seem tautological because the schemes involved are all affine, real progress has been made regarding our understanding of these cohomology theories by taking this point of view seriously.

\section{Algebraic Geometry}
\subsection{Preliminaries}
Recall that 
\[
\E^0 \cong W(k)\powser{u_1,\ldots,u_{n-1}}
\]
and that $\E$ has an associated formal group 
\[
\G_{\E} = \Spf \E^0(BS^1).
\]
This is the universal deformation of a height $n$ formal group over the field $k$. A good reference for the construction of $\E$ and its basic properties (as well as a proof that it is an $A_{\infty}$-ring spectrum) is \cite{Rezk-Notes}. For doing character theory it is very useful to view this as a \pdiv group. 

\begin{definition}
A \pdiv group over a commutative ring $R$ of height $n$ is an inductive system $(G_v,i_v) = G_1 \lra{i_1} G_2 \lra{i_2} \ldots$ such that
\begin{enumerate}
\item $G_v$ is a finite flat commutative group scheme over $R$ of order $p^{vn}$;
\item For each $v$, there is an exact sequence 
\[
0 \lra{} G_v \lra{i_v} G_{v+1} \lra{p^v} G_{v+1},
\]
where $i_v$ is the natural inclusion and $p^v$ is multiplication by $p^v$ in $G_{v+1}$.
\end{enumerate}
\end{definition}
A finite flat group scheme of order $p^{vn}$ is the scheme associated to a Hopf algebra that is finitely generated and flat (free for the purposes of this paper) of (local) rank $p^{vn}$ as a module over the base.

The \pdiv group associated to $\E$ is the system of finite flat group schemes
\[
\G_{\E}[p] \lra{} \G_{\E}[p^2] \lra{} \ldots,
\]
where $\G_{\E}[p^k]$ is the $p^k$-torsion in the formal group $\G_{\E}$. This \pdiv group actually contains equivalent information to the formal group by Tate's Lemma 0 from \cite{Tate-p-div}. 

It follows from Lemma 5.7 of \cite{hkr} that 
\[
\G_{\E}[p^k] \cong \Spec(\E^0(B\Z/p^k))
\]
and that with a choice of coordinate
\[
\E^0(B\Z/p^k) \cong \E^0\powser{x}/[p^k](x) \cong \E^0[x]/f(x),
\]
where $[p^k](x)$ is the $p^k$-series for the formal group law associated to the coordinate and $f(x)$ is a monic degree $p^{kn}$ polynomial. The $n$ in the exponent is the height of the \pdiv group. The last isomorphism is an application of the Weierstrass preparation theorem. 

It should be noted that we can take $\Spec(-)$ and not $\Spf(-)$ here because $\E^0(B\Z/p^k)$ is a free $\E^0$-module of rank $p^{kn}$. Because it is a finitely generated $\E^0$-algebra it is well behaved (unlike $\E^0(BS^1) \cong \E^0\powser{x}$).

There is a fully faithful embedding of the category of finite groups into the category of finite flat group schemes over $\Spec(R)$ by sending $G$ to
\[
\Spec(\Prod{G}R).
\]
We call finite flat group schemes of this form constant group schemes. 
The only constant height $n$ \pdiv group is
\[
(\Q_p/\Z_p)^n = (\Z/p^{\infty})^n = ((\Z/p)^n \lra{} (\Z/p^2)^n \lra{} \ldots).
\]
From now on we will write $\QZ^n$ for $(\QZ)^n$. For more information about \pdiv groups there are the classic books \cite{Dem} and \cite{messing} and the articles \cite{Shatz} and \cite{Tate-p-div}. For more information about finite flat group schemes there is Tate's article \cite{Tate-finiteflat}. 

Lastly, we need to understand how to base change \pdiv groups. We do the most obvious thing. For an $\E^0$-algebra $R$, let
\[
R\otimes \G_{\E} := (R\otimes \G_{\E}[p] \lra{} R\otimes \G_{\E}[p^2] \lra{} \ldots),
\]
where
\[
R \otimes \G_{\E}[p^k] := \Spec(R) \times_{\Spec(\E^0)} \G_{\E}[p^k].
\]
Note that for a constant group scheme $G$ over $\Spec(S)$ and an $S$-algebra $R$, $R \otimes G$ is the constant group scheme $G$ over $\Spec(R)$.

\subsection{Construction of $C_0$}
Before the interlude on algebraic geometry we decided that we were interested in the object
\[
p^{-1}\E^0 \otimes_{\E^0} \E^0(B\Z/p^k).
\]
We have switched to working with the zeroth coefficients for convenience, everything does hold in the graded setting. From the discussion in the previous section we recognize this ring as the global sections of
\[
p^{-1}\E^0 \otimes \G_{\E}[p^k].
\]
It is a result of Cartier's discussed in \cite{Shatz} that this is an \'etale finite flat group scheme (because $p^{-1}\E^0$ is a $\Q$-algebra). In fact he shows that any finite flat group scheme is \'etale after inverting $p$. Instead of working at the $p^k$-torsion we could consider the \pdiv group 
\[
\G_{et} := p^{-1}\E^0 \otimes \G_{\E}.
\]
This is called an \'etale \pdiv group because the $p^k$ torsion is \'etale for every $k$.
\'Etale finite flat group schemes are not too far away from being constant group schemes. In fact we can easily describe an extension of $p^{-1}\E^0$ over which $\G_{et}[p^k]$ is constant. We could just take the algebraic closure of the fraction field of $p^{-1}\E^0$. So let's assume for the moment that we have a $p^{-1}\E^0$-algebra $C_0$ over which 
\[
C_0 \otimes \G_{et} \cong \QZ^n
\]
so 
\[
C_0 \otimes \G_{et}[p^k] \cong (\Zp{k})^n.
\]
On the level of cohomology rings this means that
\[
C_0 \otimes_{\E^0} \E^0(B\Zp{k}) \cong \Prod{(\Zp{k})^n}C_0 \cong C_{0}\otimes_{p^{-1}\E^0}(p^{-1}\E)^0(\Coprod{(\Zp{k})^n} \ast).
\]
We can rewrite the right hand side as
\[
C_{0}^{0}(\Coprod{(\Zp{k})^n} \ast).
\]
Thus over $C_0$ we can solve Problem \ref{problem} for an infinite space.

\begin{surprise}
Over the ring $C_0$ we can solve Problem \ref{problem} for all spaces of the form $EG \times_G X$.
\end{surprise}

But what is $C_0$? Instead of taking an extension of $p^{-1}\E^0$ that suffices, it would be much more satisfying to write down the universal extension of $p^{-1}\E^0$ equipped with an isomorphism
\[
\QZ^{n} \lra{\cong} C_0 \otimes \G_{et}.
\]
This is what we do now.

Let $A$ be a finite abelian group such that $A = A[p^k]$. Let
\[
A^* = \hom(A, S^1) \cong \hom(A, \Z/p^k)
\]
be the Pontryagin dual of $A$. The isomorphism can be made canonical by setting $\Z/p^k = S^1[p^k]$.

With this setup, we will use the following key lemma of \cite{hkr}:
\begin{lemma} \label{isolemma}
Let $R$ be an $\E^0$-algebra. There is a natural isomorphism of functors
\[
\hom_{\E^0\text{-alg}}(\E^0(BA),R) \cong \hom_{\text{gp-scheme}}(R\otimes A^*, R\otimes \G_{\E}[p^k]).
\]
\end{lemma}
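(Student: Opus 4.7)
The plan is to construct a natural map from the LHS to the RHS using the evaluation pairing $A \times A^* \to \Z/p^k$, then reduce to the cyclic case by the structure theorem for finite abelian groups together with a K\"unneth argument, at which point the bijection can be checked directly.

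First, I would construct the natural transformation. Given $a^* \in A^*$, viewed as a homomorphism $A \to \Z/p^k$, the induced map $BA \to B\Z/p^k$ gives a ring map $\E^0(B\Z/p^k) \to \E^0(BA)$. Composing with a prescribed $\phi \colon \E^0(BA) \to R$ produces an $\E^0$-algebra map $\E^0(B\Z/p^k) \to R$, that is, an $R$-point of $\G_\E[p^k]$. Letting $a^*$ vary assembles this data into a function $A^* \to \G_\E[p^k](R)$, which is a group homomorphism because the addition in $A$ induces the coproduct on $\E^0(BA)$ corresponding to the formal-group sum of $R$-points of $\G_\E[p^k]$. Since $R\otimes A^*$ is the constant group scheme on $A^*$, such a group homomorphism is precisely the data of a map of $R$-group schemes $R\otimes A^* \to R\otimes \G_\E[p^k]$.

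Next, I would reduce to the cyclic case. By the structure theorem, write $A \cong \bigoplus_i \Z/p^{k_i}$ with each $k_i \leq k$. Then $BA \simeq \prod_i B\Z/p^{k_i}$, and since each $\E^0(B\Z/p^{k_i})$ is free of finite rank over $\E^0$, the K\"unneth map yields $\E^0(BA) \cong \bigotimes_i \E^0(B\Z/p^{k_i})$ with the tensor product taken over $\E^0$. The LHS therefore splits as $\prod_i \hom_{\E^0\text{-alg}}(\E^0(B\Z/p^{k_i}),R)$. On the other side, Pontryagin duality sends direct sums of finite abelian groups to direct sums, so $R\otimes A^*$ is the product of the constant group schemes $R\otimes (\Z/p^{k_i})^*$, and group scheme maps out of a product into the abelian group scheme $R\otimes \G_\E[p^k]$ decompose accordingly. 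The natural transformation above respects both decompositions, so it suffices to treat the case $A = \Z/p^m$ with $m \leq k$. Here the LHS is $\hom_{\E^0\text{-alg}}(\E^0\powser{x}/[p^m](x), R)$, namely the set of $r \in R$ with $[p^m](r) = 0$, which is $\G_\E[p^m](R)$. On the RHS, a group scheme homomorphism out of the constant group scheme $R\otimes \Z/p^m$ is determined by the image of a generator, and such an image must be an element of $(R\otimes \G_\E[p^k])(R) = \G_\E[p^k](R)$ annihilated by $p^m$; since $m \leq k$ this subgroup is exactly $\G_\E[p^m](R)$. A quick unwinding shows that the natural transformation realises this identification.

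The main obstacle is not a difficult calculation but organisational bookkeeping: one has to keep track of the Pontryagin dualities and the K\"unneth splittings, and verify that the set-level bijection in the cyclic case is genuinely a group homomorphism. This last point rests on the compatibility between the coproduct on $\E^0(BA)$ induced by addition in $A$ and the formal group law on $\G_\E$, which ultimately reflects the fact that $BA$ is a topological abelian group.
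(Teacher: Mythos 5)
Your construction of the comparison map is the same as the paper's: each element of $A^*$ induces $\E^0(B\Z/p^k) \to \E^0(BA) \to R$, and these assemble into a homomorphism of group schemes $R \otimes A^* \to R \otimes \G_{\E}[p^k]$, exactly as in the paper's sketch. The paper stops at that sketch, while you go on to check bijectivity by a K\"unneth reduction to cyclic $A$ and a direct identification of both sides with the $p^m$-torsion points; that completion is sound, so your argument is correct and follows essentially the same route.
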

\begin{proof}
(Sketch) An $\E^0$-algebra map
\[
f:\E^0(BA) \lra{} R
\]
is equivalent to the $R$-algebra map
\[
f:R\otimes_{\E^0}\E^0(BA) \lra{} R.
\]
Now $a \in A^*$ gives a map
\[
R\otimes_{\E^0} \E^0(B\Z/p^k) \lra{} R \otimes_{\E^0} \E^0(BA) \lra{f} R.
\]
Using all of the elements of $A^*$ gives a map
\[
R\otimes_{\E^0} \E^0(B\Z/p^k) \lra{} \Prod{A^*}R
\]
in the codomain.
\end{proof}

\begin{cor}
For $R$ a $p^{-1}\E^0$-algebra, there is a natural isomorphism
\[
\hom_{p^{-1}\E^0\text{-alg}}(p^{-1}\E^0\otimes_{\E^0}\E^0(BA),R) \cong \hom_{\text{gp-scheme}}(R\otimes A^*, R\otimes \G_{et}[p^k]).
\]
\end{cor}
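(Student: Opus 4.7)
The plan is to obtain the corollary by specializing Lemma \ref{isolemma} to the case where $R$ is a $p^{-1}\E^0$-algebra, and then rewriting both sides using the universal property of localization.

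First I would handle the left-hand side. Since $R$ is a $p^{-1}\E^0$-algebra, $p$ acts invertibly on $R$, so the universal property of localization gives
\[
\hom_{\E^0\text{-alg}}(\E^0(BA),R) \cong \hom_{p^{-1}\E^0\text{-alg}}(p^{-1}\E^0 \otimes_{\E^0} \E^0(BA),R),
\]
by an adjunction between extension of scalars along $\E^0 \to p^{-1}\E^0$ and restriction. This identifies the left-hand side of Lemma \ref{isolemma} with the left-hand side of the corollary.

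Next I would handle the right-hand side. Because $R$ is a $p^{-1}\E^0$-algebra, the tensor product $R \otimes_{\E^0} \G_{\E}[p^k]$ factors through $p^{-1}\E^0$:
\[
R \otimes \G_{\E}[p^k] = \Spec(R) \times_{\Spec(\E^0)} \G_{\E}[p^k] \cong \Spec(R) \times_{\Spec(p^{-1}\E^0)} \bigl(\Spec(p^{-1}\E^0) \times_{\Spec(\E^0)} \G_{\E}[p^k]\bigr),
\]
which by definition is $R \otimes \G_{et}[p^k]$. So the right-hand side of Lemma \ref{isolemma} agrees with the right-hand side of the corollary.

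Combining the two identifications yields the stated natural isomorphism, and naturality in $R$ is inherited from naturality in Lemma \ref{isolemma}. I do not expect a real obstacle here: everything is a bookkeeping exercise with the universal property of base change, and the content of the corollary is entirely contained in the lemma.
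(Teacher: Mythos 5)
Your proposal is correct and is exactly the paper's argument, which simply says the corollary follows by pulling the isomorphism of Lemma \ref{isolemma} back to $p^{-1}\E^0$; you have merely spelled out the two base-change identifications (scalar extension on the algebra side, factoring the fiber product through $\Spec(p^{-1}\E^0)$ on the group-scheme side) that the paper leaves implicit.
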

\begin{proof}
This follows by pulling the isomorphism in the previous lemma back to $p^{-1}\E^0$.
\end{proof}
If we take $A = (\Zp{k})^n =: \Lk$ and set
\[
R = C_{0,k}' := p^{-1}\E^0\otimes_{\E^0}\E^0(B\Lk)
\]
then the identity map on the left side provides a canonical map
\[
\Lk^* \lra{} C_{0,k}' \otimes \G_{et}[p^k].
\]
Of course, $\Lk^* \cong \QZ^n[p^k]$ is non-canonically isomorphic to $\Lk$. Algebraically we can think of this as formally adjoining roots to the polynomial $f(x)$ described in the beginning of this section. To force the map to be an isomorphism we need to invert the determinant of the map on global sections.

Evaluating the above map at $C_{0,k}'$ gives a map (of sets)
\[
\Lk^* \lra{z} \G_{et}[p^k](C_{0,k}').
\]
This picks out the ``formal roots" of $f(x)$.

\begin{lemma} \label{det}
Inverting the determinant is equivalent to inverting the nonzero image of $z$.
\end{lemma}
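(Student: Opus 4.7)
The plan is to make the map underlying the determinant explicit as a Vandermonde matrix in the elements $z(a)$, and then handle each direction of the localization equivalence separately.

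First I would write out the map on global sections. The morphism of group schemes $C_{0,k}' \otimes \Lk^* \to C_{0,k}' \otimes \G_{et}[p^k]$ dualizes to an $R$-linear map
\[
\phi: R\otimes_{\E^0} \E^0(B\Z/p^k) \to \prod_{\Lk^*} R,
\]
where $R = C_{0,k}'$. The source is a free $R$-module with basis $\{1, x, \ldots, x^{p^{kn}-1}\}$ coming from a coordinate on $\G_{\E}$, and the target is free on the canonical idempotents $\{e_a\}_{a \in \Lk^*}$. By the construction in the proof of Lemma~\ref{isolemma}, the $a$-component of $\phi(x)$ is exactly $z(a)$, so the matrix of $\phi$ is the Vandermonde $V_{a,i} = z(a)^i$. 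Fixing any total order on $\Lk^*$ that starts with $0$ gives
\[
\det \phi = \pm \prod_{a > b}\bigl(z(a) - z(b)\bigr).
\]

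For the easy direction, note that $z$ is a group homomorphism and therefore sends the identity to the identity; that is, $z(0) = 0$ in the coordinate $x$. So for each $a \neq 0$ the factor $z(a) - z(0) = z(a)$ appears in $\det \phi$, and hence $R[(\det\phi)^{-1}]$ automatically inverts every nonzero value of $z$.

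The harder direction is the converse. Set $R' = R\bigl[z(a)^{-1} : a \neq 0\bigr]$; I will show $\det\phi \in (R')^{\times}$ by checking that $\det\phi \notin \m$ for every maximal ideal $\m$ of $R'$. On the residue field $\kappa = R'/\m$, the reduction $\bar{z} : \Lk^* \to \G_{et}[p^k](\kappa)$ is still a group homomorphism, and its kernel is $\{a : z(a) \in \m\} = 0$ because each $z(a)$ with $a \neq 0$ is already invertible in $R'$. Hence $\bar z$ is injective, its values are pairwise distinct in $\kappa$, and no Vandermonde factor $z(a) - z(b)$ with $a \neq b$ lies in $\m$. This forces $\det \phi \notin \m$ for every such $\m$, so $\det \phi$ is a unit in $R'$, and the two localizations then agree by their universal properties. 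The only real content is this fiberwise step: the tempting alternative of writing $z(a) - z(b)$ as $z(a-b)$ times a formal-group-law unit drags in convergence considerations that the fiberwise argument sidesteps entirely.
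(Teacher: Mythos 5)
Your proof is correct, and its decisive step is genuinely different from the paper's. Hopkins--Kuhn--Ravenel (and the paper) set up the same Vandermonde matrix, but then settle both directions at once by an explicit factorization of the determinant: using the formal-group identity $x-_{\G}y = u\cdot(x-y)$ with $u$ a unit, the determinant equals, up to a unit, $\prod_{0\neq \bar j\in\Lk^*}[\bar j](\bar x)^{p^{kn}-1}$, i.e.\ a unit times a power of the product of the nonzero values of $z$; the equivalence of the two localizations is then immediate. You keep the easy direction (each $z(a)=z(a)-z(0)$ is a factor of the determinant) but replace the hard direction by a residue-field argument: over $R'=C_{0,k}'[z(a)^{-1}:a\neq 0]$ and any maximal ideal $\mathfrak{m}$, the reduced homomorphism $\bar z$ has trivial kernel, hence is injective, hence the coordinates of its values are pairwise distinct in the residue field, so no Vandermonde factor lies in $\mathfrak{m}$ and the determinant is a unit. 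This is sound: $z$ is indeed a homomorphism of abstract groups (it comes from the group-scheme map of Lemma \ref{isolemma} evaluated on points), a point of $\G_{et}[p^k]$ over a field is determined by its coordinate, and an element outside every maximal ideal is a unit. What the two routes buy: the paper's identity is more informative --- it exhibits the determinant itself, up to a unit, as a product of nonzero values of $z$, the same kind of manipulation that feeds into the level-structure/Drinfeld-ring story --- at the cost of justifying the substitution of ring elements into the power-series identity (harmless, but it should be done in the complete local ring $\E^0(B\Lk)$, where the relevant elements are topologically nilpotent and $u$ evaluates to a unit, and then base-changed to $C_{0,k}'$, since after inverting $p$ one can no longer argue by completeness); your fiberwise argument sidesteps that point entirely and is more elementary, but it yields only the equivalence of localizations rather than the explicit formula for the determinant. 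One cosmetic remark in your favor: you correctly take the matrix to be of size $p^{kn}$, where the paper's displayed matrix has a typo ($p^k$).
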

\begin{proof}
Fixing a coordinate $x$ gives an isomorphism
\[
C_{0,k}' \cong p^{-1}\E^0 \otimes_{\E^0} \E^0 \powser{x_1, \ldots, x_n}/([p^k](x_1),\ldots, [p^k](x_n)).
\]
For $\bar{i} = (i_1, \ldots, i_n) \in \Lk^*$, let
\[
[\bar{i}](\bar{x}) = [i_1](x_1) +_{\G_{et}} \ldots +_{\G_{et}} [i_n](x_n).
\]
Then $\bar{i} \mapsto [\bar{i}](\bar{x}) \in \G_{et}[p^k](C_{0,k}')$. The global sections of
\[
\Lk^* \lra{} C_{0,k}' \otimes \G_{et}[p^k]
\]
is given by the Vandermonde matrix
\[
\left(
\begin{array}{ccccc}
1 & [\bar{j_1}](\bar{x}) & [\bar{j_1}](\bar{x})^2 & \ldots & [\bar{j_{1}}](\bar{x})^{p^k-1} \\
1 & [\bar{j_2}](\bar{x}) & [\bar{j_2}](\bar{x})^2 & \ldots & [\bar{j_{2}}](\bar{x})^{p^k-1}\\
\vdots & \vdots & \vdots & \ddots & \vdots \\
1 & [\bar{j_{p^k}}](\bar{x}) & [\bar{j_{p^k}}](\bar{x})^2 & \ldots & [\bar{j_{p^k}}](\bar{x})^{p^k-1}\\
\end{array} \right),
\]
where $\{\bar{j_1},\ldots,\bar{j_{p^k}}\} = \Lk^*$.

Now the determinant of the matrix is
\[
\Prod{1 \leq s < t \leq p^k} ([\bar{j_t}](\bar{x}) - [\bar{j_s}](\bar{x})).
\]
It is a basic fact of subtraction for formal group laws that
\[
x -_{\G} y = u*(x-y)
\]
for a unit $u$. 

Thus the determinant is, up to multiplication by a unit, 
\begin{eqnarray*}
\Prod{1 \leq s < t \leq p^k} ([\bar{j_t}](\bar{x}) -_{\G_{et}} [\bar{j_s}](\bar{x})) & = & \Prod{1 \leq s < t \leq p^k} ([\bar{j_t} - \bar{j_s}](\bar{x})) \\
& = & \Prod{0 \neq \bar{j} \in \Lk^*} [\bar{j}](\bar{x})^{p^k-1}.
\end{eqnarray*} 
\end{proof}

Let 
\[
C_{0,k} := (det)^{-1}C_{0,k}'.
\]
The image of $z$ consists largely of zero divisors. Because of this, we should be concerned that $C_{0,k}$ is the zero ring. Proposition 6.5 of \cite{hkr} saves the day:

\begin{lemma}
The ring $C_{0,k}$ is a faithfully flat $p^{-1}\E^0$-algebra.
\end{lemma}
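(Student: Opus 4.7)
The plan is to verify the two conditions characterizing faithful flatness of a module over a commutative ring: that $C_{0,k}$ is flat over $p^{-1}\E^0$, and that the induced map $\Spec(C_{0,k}) \to \Spec(p^{-1}\E^0)$ is surjective. Equivalently, given flatness, it suffices to check that $C_{0,k} \otimes_{p^{-1}\E^0} F$ is nonzero for every residue field $F$ of $p^{-1}\E^0$.

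Flatness is immediate. By iterated K\"unneth we have $\E^0(B\Lk) \cong \E^0(B\Z/p^k)^{\otimes n}$, which is free of rank $p^{kn^2}$ over $\E^0$, so $C_{0,k}'$ is free of the same rank over $p^{-1}\E^0$. Since $C_{0,k}$ is obtained from $C_{0,k}'$ by inverting the single element $\det$, it is a localization of a flat module, hence flat over $p^{-1}\E^0$.

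For the non-vanishing, fix a residue field $F$ of $p^{-1}\E^0$ and a separable closure $\bar F$. Since $\bar F$ is a $\Q$-algebra, Cartier's theorem (cited earlier in this section) makes $\bar F \otimes \G_{et}[p^k]$ an \'etale finite flat commutative group scheme over $\bar F$; an \'etale group scheme over a separably closed field is constant, and comparing orders yields a (non-canonical) isomorphism $\bar F \otimes \G_{et}[p^k] \cong \Lk^*$. Applying the corollary of Lemma \ref{isolemma} after further base change to $\bar F$, the $\bar F$-algebra maps $\bar F \otimes_{p^{-1}\E^0} C_{0,k}' \to \bar F$ correspond to group scheme homomorphisms $\Lk^* \to \bar F \otimes \G_{et}[p^k]$, that is, to abstract group homomorphisms $\Lk^* \to \Lk^*$. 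By Lemma \ref{det}, passing from $C_{0,k}'$ to $C_{0,k}$ restricts this correspondence to those homomorphisms that are injective; since source and target have the same order $p^{kn}$, these are precisely the isomorphisms. Such isomorphisms exist (a $\GL_n(\Zp{k})$-torsor of them), so $\Spec(C_{0,k})(\bar F) \neq \emptyset$, and therefore $C_{0,k} \otimes_{p^{-1}\E^0} \bar F \neq 0$, forcing $C_{0,k} \otimes_{p^{-1}\E^0} F \neq 0$.

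The main step is the appeal to Cartier's theorem to conclude that $\G_{et}[p^k]$ becomes a constant group scheme over a separable closure of each residue field of $p^{-1}\E^0$. Once that is in hand, combining the Vandermonde identification of the determinant from Lemma \ref{det} with the order-matching argument reduces the non-vanishing to the trivial observation that $\GL_n(\Zp{k})$ is nonempty.
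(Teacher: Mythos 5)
The paper does not actually prove this lemma---it quotes it as Proposition 6.5 of \cite{hkr}---so you are supplying the missing argument, and your overall strategy is the right one and close in spirit to the standard level-structure argument: flatness because $C_{0,k}$ is a localization of the finite free $p^{-1}\E^0$-module $C_{0,k}'$, and faithfulness by showing every geometric fiber of $\Spec(C_{0,k}) \to \Spec(p^{-1}\E^0)$ is nonempty, using Lemma \ref{isolemma} to identify $\bar{F}$-points of $\Spec(C_{0,k}')$ with homomorphisms out of $\Lk^*$ and Lemma \ref{det} to cut these down to the injective ones. That last translation is correct: under the correspondence $[\bar{j}](\bar{x})$ is sent to the coordinate of the image of $\bar{j}$, which vanishes precisely when that image is the identity section, so inverting the determinant is inverting exactly at the homomorphisms with trivial kernel.

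There is, however, one genuine gap: you justify $\bar{F} \otimes \G_{et}[p^k] \cong \Lk^*$ by constancy plus ``comparing orders''. Order alone does not determine a finite abelian $p$-group once $n \geq 2$: a constant group of order $p^{kn}$ killed by $p^k$ could a priori be, say, $\Z/p^2 \times \Z/p \times \Z/p$ when $n=k=2$, and that group admits no injection from $(\Z/p^{2})^2$ (multiplication by $p$ has image of order $p$ on it, but of order $p^2$ on $(\Z/p^{2})^2$). If the fiber group had such a shape, the set of injective homomorphisms---hence your fiber---would be empty, which is precisely what you are trying to exclude, so the step cannot be waved through. The missing input is the $p$-divisible group structure: over $\bar{F}$ the whole system $\bar{F} \otimes \G_{et}$ is a constant $p$-divisible group of height $n$, and, as recalled in Section 3, the only constant height $n$ $p$-divisible group is $\QZ^n$; its $p^k$-torsion is therefore $(\Zp{k})^n \cong \Lk^*$. (Concretely, the orders $p^{vn}$ for all $v$ together with $\G_{et}[p^v] = \G_{et}[p^{v+1}][p^v]$ force each level to be $(\Zp{v})^n$.) With that substitution---everything else, including the reduction to residue fields, the use of Cartier's theorem, and the final torsor observation---your proof is complete.
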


Over the ring
\[
C_0 = \Colim{k} C_{0,k}
\]
there is a canonical isomorphism of \pdiv groups
\[
\QZ^{n} \lra{\cong} C_0 \otimes \G_{et}.
\]

This is because $C_0$ is a $C_{0,k}$-algebra for all $k$. Thus the canonical map $p^{-1}\E^0\otimes_{\E^0}\E^0(B\Lk) \lra{} C_0$ corresponds to an isomorphism
\[
\Lk^* \lra{\cong} C_0 \otimes \G_{et}[p^k].
\]
But more, the maps fit together for all $k$ giving a map 
\[
\Colim{k} \text{ } p^{-1}\E^0\otimes_{\E^0}\E^0(B\Lk) \lra{} C_0
\]
and this implies the isomorphism.
\begin{remark}
It may be helpful to look at Proposition 2.17 in \cite{tgcm}. It provides a description of $C_0$ as the $p^{-1}\E^0$-algebra that represents the functor of isomorphisms from $\QZ^n$ to $\G_{et}$. 
\end{remark}

We can use $C_0$ to construct a new cohomology theory by extension of coefficients:
\[
C_{0}^0(X) = C_0 \otimes_{p^{-1}\E^0} (p^{-1}\E)^0(X).
\]

\begin{example} \label{padicktheory}
Let us work out $C_0$ in the case of $E_1 = K_p$. Let 
\[
\G_{K_p} = \hat{\G}_m
\]
be the formal multiplicative group. We can choose a coordinate so that the group law is
\[
x+_{\G_m}y = x+y+xy.
\] 
In this case something nice happens, we have that
\[
\G_m[p^k] \cong \hat{\G}_m[p^k].
\]
That is, the $p^k$-torsion of the multiplicative formal group is the $p^k$-torsion of a global geometric object: the multiplicative group.

On global sections and with the standard coordinate this is the map
\[
\Z_p[x]/([p^k](x)) \cong \Z_p[x]/(x^{p^k}-1)
\]
given by
\[
x \mapsto x-1.
\]
Thus we have that 
\[
C_{0,k}' = \Q_p[x]/([p^k](x)) \cong \Q_p[x]/(x^{p^k}-1).
\]
The non-zero image of $\Lk^* \cong \Zp{k}$ under $z$ is just 
\[
\{x,[2](x),\ldots, [p^k-1](x)\},
\]
which is
\[
\{x-1,x^2-1, \ldots, x^{p^k-1}-1\}
\]
under the isomorphism.
There is the factorization
\[
x^{p^k}-1 = \Prod{i=1,\ldots,k}\zeta_{p^i}(x),
\]
where $\zeta_{p^i}(x)$ is the $p^i$th cyclotomic polynomial. But
\[
x^{p^{k-1}}-1 = \Prod{i=1,\ldots,(k-1)}\zeta_{p^i}(x)
\]
and this is one of the elements in the image of $z$. Clearly it is a zero-divisor in $C_{0,k}'$. Thus the canonical map
\[
\Q_p[x]/(x^{p^k}-1) \lra{} (x^{p^{k-1}}-1)^{-1}\Q_p[x]/(x^{p^k}-1)
\]
factors through the quotient map
\[
\Q_p[x]/(x^{p^k}-1) \lra{} \Q_p[x]/(\zeta_{p^k}(x)).
\]
But $\Q_p[x]/(\zeta_{p^k}(x))$ is a field so the canonical map
\[
\Q_p[x]/(\zeta_{p^k}(x)) \lra{\cong} (x^{p^{k-1}}-1)^{-1}\Q_p[x]/(x^{p^k}-1)
\]
is an isomorphism. Thus $C_{0,k}$ is just $\Q_p$ adjoin a primitive $p^k$th root of unity and
\[
C_0 = \Colim{k}\text{ }C_{0,k} = \Colim{k}\text{ }\Q_p(\zeta_{p^k})
\]
is $\Q_p$ adjoin all prime power roots of unity - the maximal ramified extension of $\Q_p$. \qed
\end{example}

\section{The Character Map}
\subsection{The construction}
For $X$ a finite $G$-CW complex, the character map takes the form
\[
\Phi_G: \E^0(EG \times_G X) \lra{} C_{0}^0(EG \times_G \Fix_n(X)),
\]
where $\Fix_n(X)$ is a finite $G$-CW complex that we define below. The map is the composite of two maps. The first is induced by a map of topological spaces and the second is defined algebraically using the definition of $C_0$. To begin, we will define $\Fix_n(X)$ and give some examples.

\begin{definition}
For a finite group $G$, let
\[
G_{p}^{n} = \hom(\Z_{p}^{n},G).
\]
\end{definition}

This is just $n$-tuples of commuting prime-power order elements of $G$:
\[
G_{p}^{n} \cong \{(g_1,\ldots, g_n)|[g_i,g_j] = e, \text{ } g_{i}^{p^k}=e \text{ for } k \gg 0\}.
\]
The group $G$ acts on this set by conjugation and we will write $G_{p}^{n}/\sim$ for the quotient by this action.

\begin{definition} \label{fix}
For a finite $G$-CW complex $X$, let
\[
\Fix_n(X) = \Coprod{\al \in G_{p}^{n}} X^{\im \al},
\]
where $X^{\im \al}$ is the fixed points of $X$ with respect to the image of $\al$.
\end{definition}

This is a finite $G$-CW complex by intertwining the action of $G$ on $G_{p}^{n}$ by conjugation and the action of $G$ on $X$. To be precise, for $x \in X^{\im \al}$, we let $gx \in X^{\im g \al g^{-1}}$.

The $G$-space $\Fix_n(X)$ is also known as the $n$-fold inertia groupoid of the $G$-space $X$ (really a $p$-complete form of the inertia groupoid). That is, the definition above can be made for all finite groups uniformly in the category of topological groupoids. Let $X \mmod G$ be the action topological groupoid associated to the finite $G$-CW complex $X$. This looks like
\[
X \times G \rightrightarrows X,
\]
where the maps are the source and target maps (of course there are also composition, inverse, and identity maps). 

The internal mapping topological groupoid between two action groupoids $W \mmod F$ and $X\mmod G$, $\text{Hom}_{top.gpd}(W \mmod F, X \mmod G)$, can be described in the following way. The space of functors between the two groupoids is given by the fiber product
\[
\xymatrix{\hom_{top.gpd}(W \mmod F, X \mmod G) \ar[r] \ar[d] & \hom_{top}(W \times F, X \times G) \ar[d]^{(s_*,t_*)} \\ \hom_{top}(W,X) \ar[r]^-{(s^*,t^*)} & \hom_{top}(W \times F, X) \times \hom_{top}(W \times F, X).}
\]

Let $[\underline{1}] := (0 \rightarrow 1)$ be the free-standing isomorphism. Now we have that
\[
\text{Hom}_{top.gpd}(W \mmod F, X \mmod G) = \big(\hom_{top.gpd}(W \mmod F \times [\underline{1}], X \mmod G) \rightrightarrows \hom_{top.gpd}(W \mmod F, X \mmod G)\big),
\]
where the maps are induced by the source and target maps in $[\underline{1}]$.

\begin{lemma}
Let $\ast \mmod \Z_{p}^{n}$ be the topological groupoid with a single object with $\Z_{p}^{n}$ automorphisms. There is a natural isomorphism 
\[
\text{Hom}_{top. gpds}(\ast \mmod \Z_{p}^{n}, X \mmod G) \cong \Fix_n(X) \mmod G.
\]
\end{lemma}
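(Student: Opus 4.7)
The plan is to unpack the internal hom groupoid in terms of ordinary functors and natural transformations, and then match objects and morphisms with those of $\Fix_n(X) \mmod G$. Since $G$ is finite discrete and $X$ is a $G$-CW complex, the only input of ``topology'' is that continuous homomorphisms $\Z_p^n \to G$ are exactly those factoring through some finite quotient $(\Zp{k})^n$, i.e., elements of $G_p^n$.

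First I would compute the objects of the left-hand side. An object is a continuous functor $F: \ast \mmod \Z_p^n \to X \mmod G$, which is determined by the image $x = F(\ast) \in X$ together with the restriction of $F$ to automorphisms, a continuous group homomorphism $\al: \Z_p^n \to G$. By the remark above, $\al \in G_p^n$. Functoriality forces $\al(v) \cdot x = x$ for every $v \in \Z_p^n$, so $x \in X^{\im \al}$. Hence the object set of $\text{Hom}_{top.gpd}(\ast \mmod \Z_p^n, X \mmod G)$ is exactly $\Coprod{\al \in G_p^n} X^{\im \al} = \Fix_n(X)$.

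Next I would compute the morphisms. Using the description of the internal hom as $\hom(\ast \mmod \Z_p^n \times [\underline{1}], X \mmod G) \rightrightarrows \hom(\ast \mmod \Z_p^n, X \mmod G)$ given in the excerpt, a morphism from $(\al_1, x_1)$ to $(\al_2, x_2)$ is a natural transformation $\eta: F_1 \Rightarrow F_2$. Since $\ast \mmod \Z_p^n$ has a single object, $\eta$ is specified by a single element $g \in G$ (the morphism $x_1 \to x_2$ in $X \mmod G$), which by definition of the action groupoid requires $g \cdot x_1 = x_2$. The naturality square for each $v \in \Z_p^n$ reads $g \cdot \al_1(v) = \al_2(v) \cdot g$, equivalently $\al_2 = g \al_1 g^{-1}$. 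Comparing with Definition \ref{fix}, this is exactly the intertwined $G$-action on $\Fix_n(X)$: $g$ sends $(\al, x) \in X^{\im \al}$ to $(g \al g^{-1}, g x) \in X^{\im g\al g^{-1}}$.

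With both bijections in hand I would check that they respect source, target, identity, and composition, giving an isomorphism of topological groupoids, naturally in the $G$-space $X$. The only place a difficulty could conceivably hide is in verifying that the mapping-space topologies from the fiber-product definition of the internal hom agree with the evident topology on $\Fix_n(X)$; but since $G_p^n$ is a finite discrete set and all the homomorphism sets in question carry the subspace topology inherited from discrete targets, this reduces to the bijections already established. Overall, the lemma amounts to the standard computation of the internal hom out of the classifying groupoid of $\Z_p^n$, and no serious obstacle is anticipated.
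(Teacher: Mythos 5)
Your proposal is correct and follows essentially the same route as the paper: unwind the internal hom so that objects become pairs $(\al, x)$ with $\al \in G_p^n$ and $x \in X^{\im \al}$, and natural transformations become elements $g \in G$ with $gx_1 = x_2$ and $g\al_1 g^{-1} = \al_2$, matching the intertwined action in Definition \ref{fix}. The only cosmetic difference is that the paper first reduces to $n=1$ and to $\ast \mmod \Z/p^k$ via adjunction and the finiteness of $G$, while you handle $\Z_p^n$ directly by noting that continuous homomorphisms factor through a finite quotient; both amount to the same computation.
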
 
\begin{proof}
We indicate the proof. We only need to do this for $n=1$ as higher $n$ follow by adjunction. It also suffices to replace $\ast \mmod \Z_p$ by $\ast \mmod \Z/p^k$ for $k$ large enough. Now a functor
\[
\ast \mmod \Z/p^k \lra{} X \mmod G
\]
picks out an element of $X$ and a prime power element of $G$ that fixes it. Thus the collection of functors are in bijective correspondence with
\[
\Coprod{\al \in G^{1}_{p}} X^{\im \al}.
\]
A natural transformation between two functors is a map
\[
\xymatrix{\Z/p^k \coprod \Z/p^k \coprod \Z/p^k \ar[r] \ar@<1ex>[d] \ar@<-1ex>[d] & G\times X \ar@<1ex>[d] \ar@<-1ex>[d] \\ \ast \coprod \ast \ar[r] & X.}
\]
The domain here is $\ast \mmod \Z/p^k \times [\underline{1}]$. The two $\Z/p^k$'s on the sides come from the identity morphisms in $[\underline{1}]$. The $\Z/p^k$ in the middle is the important one. Given two functors picking out $x_1 \in X^{\im \al_1}$ and $x_2 \in X^{\im \al_2}$, a commuting diagram above implies that a morphism from $x_1$ to $x_2$ in the inertia groupoid is an element $g \in G$ sending $x_1$ to $x_2$ in $X$. However, the composition diagram implies that $g$ must conjugate $\al_1$ to $\al_2$.
\end{proof}

We now compute $EG \times_G \Fix_n(X)$ in several cases.

\begin{example} \label{xapoint}
When $X = \ast$,
\[
EG \times_G \Fix_n(\ast) \cong EG \times_{G}^{\text{conj}} G_{p}^{n}.
\]
Fixing an element $\al$ in a conjugacy class $[\al]$ of $G_{p}^{n}$, the stabilizer of the element is precisely the centralizer of the image of $\al$ in $G$. Thus there is an equivalence
\[
\pushQED{\qed}
EG \times_{G}^{\text{conj}} G_{p}^{n} \simeq \Coprod{[\al] \in G_{p}^{n}/\sim} BC(\im \al). \qedhere
\popQED
\]
\end{example}

\begin{example}
When $G$ is a $p$-group there is an isomorphism
\[
\hom(\Z_{p}^{n},G) \cong \hom(\Z^n, G).
\]
Then there is an equivalence
\[
EG \times_{G}^{conj} G_{p}^{n} \simeq L^nBG,
\]
where $L$ is the free loop space functor. 
\qed
\end{example}

\begin{example} \label{zpk}
When $G = \Zp{k}$,
\[
\pushQED{\qed}
EG \times_{G}^{\text{conj}} G_{p}^{n} \simeq \Coprod{\al \in \Lk^*} B \Zp{k}. \qedhere
\popQED
\]
\end{example}

The topological part of the character map is a map
\[
\E^0(EG \times_G X) \lra{} \E^0(B\Lk \times EG\times_G \Fix_n(X)).
\]

Using the description of $EG \times_G \Fix_n(X)$ in terms of topological groupoids this map is extremely easy to describe. It is induced by the evaluation map
\[
ev: * \mmod \Lk \times \text{Hom}_{top.gpd}(* \mmod \Lk, X \mmod G) \lra{} X \mmod G.
\]

\begin{example} \label{topexample}
When $X = \ast$ this has a very simple description. To illustrate it, recall that a group is abelian if and only if the multiplication map is a group homomorphism. Given an abelian subgroup $A$ of a group $G$ the multiplication map
\[
\times: A \times A \lra{} G
\]
extends on one side to the centralizer of $A$ in $G$. This is a group homomorphism
\[
\times: A \times C(A) \lra{} G.
\]
This is just because
\[
a_1a_2c_1c_2 = a_1c_1a_2c_2
\]
for $a_i \in A$ and $c_i \in C(A)$.
When $X=\ast$, the evaluation map above takes the form
\[
B\Lk \times \Coprod{[\al]} BC(\im \al) \cong \Coprod{[\al]}B(\Lk \times C(\im \al)) \lra{} BG.
\]
The map on the component corresponding $[\al] \in G_{p}^{n} / \mkern-3mu \sim$ is induced by the extension of addition to the centralizer of the image of $\al$:
\[
\pushQED{\qed}
(l,c) \mapsto \al(l)c. \qedhere
\popQED
\]
\end{example}

Because $\E^0(B\Lk)$ is finitely generated and free over $\E^0$, there is a K\"unneth isomorphism
\[
\E^0(B\Lk \times EG\times_G \Fix_n(X)) \cong \E^0(B\Lk) \otimes_{\E^0} \E^0(EG \times_G \Fix_n(X)).
\]

Recall the definition of $C_0$-cohomology. The algebraic part of the character map is a map
\[
\E^0(B\Lk) \otimes_{\E^0} \E^0(EG \times_G \Fix_n(X)) \lra{} C_{0}^{0}(EG \times_G \Fix_n(X))
\]
that is defined as a tensor product of maps.

It is defined on the left bit of the tensor product by the canonical map
\[
\E^0(B\Lk) \lra{} C_0
\]
coming from the canonical map 
\[
\E^0(B\Lk) \lra{} C_{0,k}.
\]
It is defined on the right and bottom bits by the canonical map
\[
\E \lra{} p^{-1}\E.
\]
The topological and algebraic maps fit together to give
\[
\Phi_G: \E^0(EG \times_G X) \lra{} C_{0}^0\otimes_{p^{-1}\E^0}(p^{-1}\E)^0(EG \times_G \Fix_n(X)).
\]

We next discuss several special cases of $\Phi_G$.
\begin{example} \label{classfncs}
When $X = *$, Example \ref{xapoint} gives an isomorphism
\begin{align*}
C_{0}^0(EG \times_G \Fix_n(*)) &\cong C_{0}^{0}(\Coprod{[\al] \in G_{p}^{n}/\sim} BC(\im \al)) \\
&\cong \Prod{[\al] \in G_{p}^{n}/\sim}C_{0}.
\end{align*}
The last isomorphism follows from the fact that $C_{0}^{0}(BG) = C_0$ (because $C_0$ is a $\Q$-algebra). The codomain can be interpreted as class functions on $G_{p}^{n}$ taking values in $C_0$:
\[
\pushQED{\qed}
Cl(G_{p}^{n}, C_0).\qedhere
\popQED
\]
\end{example}

\begin{example} \label{trivG}
When $X$ is a trivial $G$-space, the character map takes the form
\[
\E^0(BG \times X) \lra{} \Prod{[\al] \in G_{p}^{n}/\sim} C_{0}^{0}(BC(\im \al) \times X) \cong \Prod{[\al] \in G_{p}^{n}/\sim} C_{0}^{0}(X).
\]
This is a generalization of the previous example that is often useful. \qed
\end{example}

More generally, because $C_0$ is rational, the $G$ action on $\Fix_n(X)$ can be ``pulled out":
\[
C_{0}^0(EG \times_G \Fix_n(X)) \cong (C_{0}^0(\Fix_n(X)))^G.
\]

In this example we build on the previous example and Example \ref{topexample}.
\begin{example} \label{charmapexample}
When $X = \ast$, the classifying spaces of centralizers $BC(\im \al)$ seem to play a critical role in the definition of the topological part of the character map. However, the algebraic part kills them because $C_0$ is a rational algebra. Thus when $X = \ast$, there is a very simple description of $\Phi_G$: Given $[\al] \in G_{p}^{n}/\sim$ the character map on to the factor corresponding to $[\al]$ is just
\[
\pushQED{\qed}
\E^0(BG) \lra{\E^0(B\al)} \E^0(B\Lk) \lra{\text{can}} C_0. \qedhere
\popQED
\]
\end{example}

The following example explains how $\Phi_G$ was built to reduce to the algebraic geometry of \pdiv groups. 
\begin{example} \label{cyclic}
When $G = \Zp{k}$ and $X = \ast$, we see from Example \ref{zpk} that the character map reduces to
\[
\E^0(B\Zp{k}) \lra{} \Prod{\Lk^*}C_{0}^{0}.
\]
From Example \ref{charmapexample} that the map is induced by the elements of the Pontyagin dual of $\Lk$ in exactly the same way as the construction of the isomorphism in Lemma \ref{isolemma}. Thus this map is precisely the global sections of the canonical map
\[
\pushQED{\qed}
\QZ^n[p^k] \lra{} \G_{\E}[p^k].\qedhere
\popQED
\]
\end{example}

\begin{example}
When $n=1$ and $X = *$ the character map produces a completed version of the classical character map from representation theory that is due to Adams (Section 2 of \cite{Adams-Maps2}). The map takes the form
\[
K_{p}^{0}(BG) \lra{} Cl(G_p, C_0),
\]
where $C_0$ is the maximal ramified extension of $\Q_p$ discussed in Example \ref{padicktheory}. \qed
\end{example}

\begin{note}
When working with a fixed group $G$, it suffices to take $C_{0,k}$ as the coefficients of the codomain of the character map, where $k \geq 0$ is large enough that any continuous map $\Z_p \lra{} G$ factors through $\Zp{k}$.
\end{note}

\subsection{The isomorphism}
It is not hard to prove that the character map is an isomorphism after tensoring the domain up to $C_0$:
\[
C_0\otimes_{\E^0}\Phi_G: C_0\otimes_{\E^0}\E^0(EG \times_G X) \lra{\cong} C_{0}^0(EG \times_G \Fix_n(X)).
\]
The proof is by reduction to the case of $X =\ast$ and $G = \Zp{k}$. Given a finite $G$-CW complex $X$ there is a trick of Quillen's \cite{Quillen-equivariant}, often called ``complex oriented descent", that allows you to reduce to the case of finite $G$-CW complexes with abelian stabilizers. Then Mayer-Vietoris allows you to reduce to the case of spaces of the form $D^n \times G/A$, where $A$ is abelian. Homotopy invariance reduces this to $G/A$ and the Borel construction gives
\[
EG \times_G G/A \simeq  BA.
\] 
As $A$ is finite, the K\"unneth isomorphism brings us back to the algebraic geometry at the beginning of the paper and now Example \ref{cyclic} finishes off the proof.

Most of these steps are pretty clear. We provide an exposition of complex oriented descent. Given a vector bundle over $X$ we can construct the bundle of complete flags
\[
\mathscr{F} \lra{} X. 
\]

Let $k$ be large enough that there is an injection $G \hookrightarrow U(k)$. Then the complete flags on the trivial bundle
\[
X \times \mathbb{C}^k \lra{} X
\]
is just 
\[
X \times U(k)/T \lra{} X,
\]
where $T$ is a maximal torus in $U(k)$.

\begin{lemma} \label{free} (\cite{hkr}, Proposition 2.6) \label{flags}
The ring $\E^0(\mathscr{F})$ is a finitely generated free module over $\E^0(X)$.
\end{lemma}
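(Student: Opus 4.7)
The plan is to realize the flag bundle $\mathscr{F} \to X$ as an iterated projective bundle and apply the projective bundle formula for the complex orientable cohomology theory $E_n$ at each stage.

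More precisely, let $V \to X$ be a complex vector bundle of rank $k$ (in our case the trivial rank $k$ bundle, but the argument works in general). First I would recall the projective bundle formula: for any complex orientable cohomology theory $E$ and any rank $k$ complex vector bundle $V \to X$, a choice of complex orientation produces a class $x \in E^0(\mathbb{P}(V))$ restricting to the universal first Chern class on each fiber, and the Atiyah--Hirzebruch spectral sequence argument (or a direct cell-by-cell induction) shows that
\[
E_n^0(\mathbb{P}(V)) \cong E_n^0(X)[x]/f(x),
\]
where $f(x)$ is a monic polynomial of degree $k$. In particular $E_n^0(\mathbb{P}(V))$ is free of rank $k$ over $E_n^0(X)$ with basis $1, x, \ldots, x^{k-1}$.

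Next I would build $\mathscr{F}$ as a tower. On $\mathbb{P}(V)$ there is the tautological line subbundle $L \subset \pi^*V$, and the quotient $Q = \pi^*V/L$ is a rank $k-1$ bundle over $\mathbb{P}(V)$. The projective bundle $\mathbb{P}(Q) \to \mathbb{P}(V)$ parametrizes 2-step flags in $V$, and iterating this construction $k-1$ times produces a tower
\[
\mathscr{F} = X_{k-1} \lra{} X_{k-2} \lra{} \cdots \lra{} X_1 = \mathbb{P}(V) \lra{} X_0 = X,
\]
where each map is a projective bundle of decreasing fiber rank. Applying the projective bundle formula at each level, $E_n^0(X_{i+1})$ is a free $E_n^0(X_i)$-module of finite rank. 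Composing, $E_n^0(\mathscr{F})$ is a free $E_n^0(X)$-module of rank $k!$ (the order of the Weyl group of $U(k)$).

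The main thing to verify carefully is the projective bundle formula itself, which is where the complex orientation of $E_n$ is essential; the rest of the argument is formal bookkeeping. For the specific flag bundle $X \times U(k)/T \to X$ of the statement, $V$ is the trivial rank $k$ bundle, so one could alternatively argue via the Künneth isomorphism $E_n^0(X \times U(k)/T) \cong E_n^0(X) \otimes_{E_n^0} E_n^0(U(k)/T)$ once one knows $E_n^0(U(k)/T)$ is finitely generated and free over $E_n^0$ (which is itself a consequence of the projective bundle formula applied to the universal case, or the classical fact that $U(k)/T$ has a cell decomposition with only even cells given by Schubert cells).
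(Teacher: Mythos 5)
Your proposal is correct and is, in essence, the proof of the result in the source the paper cites: the paper itself gives no argument, deferring to Proposition 2.6 of Hopkins--Kuhn--Ravenel, and their proof is exactly this iterated projective-bundle (splitting principle) computation, which moreover yields the explicit Chern-class presentation of $\E^0(\mathscr{F})$ over $\E^0(X)$ that the paper alludes to just after the lemma. The only caveat is that your K\"unneth alternative applies only to the trivial bundle $X \times U(k)/T$, whereas the projective-bundle tower handles the flag bundle of an arbitrary vector bundle, which is the generality in which the lemma is stated and used.
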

Hopkins, Kuhn, and Ravenel in fact give an explicit description of $\E^0(\mathscr{F})$ in terms of the Chern classes of the vector bundle, but Lemma \ref{free} is all that we need for complex oriented descent.

The space $U(k)/T$ has the homotopy type of a finite $G$-CW complex and has the property that it only has abelian stabilizers. Thus $X \times U(k)/T$ also has this property. Lemma \ref{flags} implies that
\[
\E^0(X) \lra{} \E^0(X \times U(k)/T) 
\] 
is faithfully flat and the pullback square
\[
\xymatrix{X \times U(k)/T \times U(k)/T \ar[r] \ar[d] & X \times U(k)/T \ar[d] \\
			X \times U(k)/T \ar[r] & X}
\]
implies that
\[
\E^0(X \times U(k)/T \times U(k)/T) \cong \E^0(X \times U(k)/T) \otimes_{\E^0(X)} \E^0(X \times U(k)/T).
\]
Thus the cosimplicial complex
\[
\E^0(X) \rightarrow \E^0(X \times U(k)/T) \rightrightarrows \E^0(X \times U(k)/T \times U(k)/T) \ldots
\]
is just the Amistur complex of the faithfully flat extension
\[
\E^0(X) \rightarrow \E^0(X \times U(k)/T)
\]
and is therefore exact. This implies that $\E^0(X)$ is the equalizer of
\[
\E^0(X \times U(k)/T) \rightrightarrows \E^0(X \times U(k)/T \times U(k)/T)
\]
and thus is determined by the cohomology of finite $G$-spaces with abelian stabilizers.

\subsection{The action of $GL_n(\Z_p)$}
The building blocks of the ring $C_0$ are the rings $\E^0(B\Lk)$. There is an obvious action of $\Aut(\Lk)$ on $\E^0(B\Lk)$. This action pulls back to an action on 
\[
C_{0,k}' = p^{-1}\E^0 \otimes_{\E^0} \E^0(B\Lk).
\]
An automorphism of $\Lk$ sends the image of $z$ (defined right above Lemma \ref{det}) to the image of $z$ so there is an induced action on $C_{0,k}$. Finally, on the colimit $C_0$ this turns into an action of 
\[
\Aut(\QZ^n) \cong GL_n(\Z_p).
\]

\begin{lemma}
The fixed points for the action of $GL_n(\Z_p)$ on $C_0$ are just $p^{-1}\E^0$.
\end{lemma}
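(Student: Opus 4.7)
My plan is to deduce the statement from faithfully flat descent at each finite level $C_{0,k}$ and then pass to the filtered colimit. The key intermediate claim is that each $C_{0,k}$ is a $GL_n(\Z/p^k)$-Galois extension of $p^{-1}\E^0$; once this is in place, the identification $C_{0,k}^{GL_n(\Z/p^k)} = p^{-1}\E^0$ is standard descent.

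First I would establish the Galois condition at each finite stage. The previous lemma already provides faithful flatness, and the remark following its construction (via the representability description from \cite{tgcm}) identifies $C_{0,k}$ as the $p^{-1}\E^0$-algebra representing
\[
R \longmapsto \Iso_R(R\otimes\Lk^*,\; R\otimes \G_{et}[p^k]).
\]
On $C_{0,k}\otimes_{p^{-1}\E^0}C_{0,k}$ the two projections supply two such isomorphisms; their comparison is an automorphism of the constant group scheme $\Lk^*$, equivalently a locally constant function valued in $GL_n(\Z/p^k)$. Combined with faithful flatness this yields the standard torsor isomorphism
\[
C_{0,k}\otimes_{p^{-1}\E^0}C_{0,k} \;\cong\; \prod_{GL_n(\Z/p^k)} C_{0,k},
\]
and Amitsur descent then identifies $p^{-1}\E^0$ with the equalizer of the two structure maps, which is precisely $C_{0,k}^{GL_n(\Z/p^k)}$.

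Next I would verify that the transition maps $C_{0,k}\to C_{0,k+1}$ are injective and equivariant for the surjection $GL_n(\Z/p^{k+1})\twoheadrightarrow GL_n(\Z/p^k)$. Equivariance is built into the construction. For injectivity, the same torsor argument applied relatively shows that $C_{0,k+1}$ is itself a Galois extension of $C_{0,k}$ with group $\ker(GL_n(\Z/p^{k+1})\to GL_n(\Z/p^k))$, so in particular faithfully flat over $C_{0,k}$. Since a filtered colimit of injections is injective, we obtain $C_{0,k}\hookrightarrow C_0$ for every $k$.

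To conclude, take $x\in C_0^{GL_n(\Z_p)}$ and choose $k$ with $x\in C_{0,k}$. The $GL_n(\Z_p)$-action on $C_{0,k}$ factors through the reduction $GL_n(\Z_p)\twoheadrightarrow GL_n(\Z/p^k)$, so for any $g\in GL_n(\Z/p^k)$ and any lift $\widetilde g\in GL_n(\Z_p)$ we have $g\cdot x = \widetilde g\cdot x$ inside $C_{0,k}$, and the right-hand side equals $x$ in $C_0$ by hypothesis; injectivity of $C_{0,k}\hookrightarrow C_0$ promotes this to equality in $C_{0,k}$, so $x\in C_{0,k}^{GL_n(\Z/p^k)}=p^{-1}\E^0$ by the first step. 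The reverse inclusion is immediate from the construction of the action. The main obstacle is the torsor isomorphism at each finite level: faithful flatness is handed to us, but verifying that comparing the two tautological isomorphisms on $C_{0,k}\otimes_{p^{-1}\E^0}C_{0,k}$ really sweeps out all of $GL_n(\Z/p^k)$ genuinely uses the representability picture of $C_{0,k}$ referenced in the remark—once that is available, the rest is formal descent plus a routine colimit argument.
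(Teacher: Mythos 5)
Your finite-level step is essentially right, and is worth keeping: granting that $C_{0,k}$ corepresents $R \mapsto \Iso(R\otimes\Lk^*,\,R\otimes\G_{et}[p^k])$ (the Remark only states this for $C_0$, but the finite-level version follows from Lemma \ref{isolemma}, its corollary, and Lemma \ref{det}, since a map of finite free $R$-modules of equal rank is invertible exactly when its determinant is a unit), the Yoneda comparison of the two tautological isomorphisms gives $C_{0,k}\otimes_{p^{-1}\E^0}C_{0,k}\cong\prod_{GL_n(\Z/p^k)}C_{0,k}$, and this is purely formal; combined with the quoted faithful flatness of $p^{-1}\E^0\to C_{0,k}$, Amitsur descent does give $C_{0,k}^{GL_n(\Z/p^k)}=p^{-1}\E^0$. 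So the step you single out as the main obstacle is actually fine. (For comparison: the paper states this lemma without proof, citing \cite{hkr}, so there is no in-text argument to measure against.)

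The genuine gap is the injectivity of $C_{0,k}\to C_0$. Your justification --- that ``the same torsor argument applied relatively'' makes $C_{0,k+1}$ a Galois, hence faithfully flat, extension of $C_{0,k}$ --- does not work as stated. The same Yoneda computation does identify $C_{0,k+1}\otimes_{C_{0,k}}C_{0,k+1}\cong\prod_{K}C_{0,k+1}$ with $K=\ker(GL_n(\Z/p^{k+1})\to GL_n(\Z/p^k))$, but such an identification by itself never implies faithful flatness or injectivity: the map $\Q\times\Q\to\Q(i)$ through the first projection satisfies $\Q(i)\otimes_{\Q\times\Q}\Q(i)\cong\Q(i)\times\Q(i)$ and is not even injective. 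In the absolute case faithful flatness was a separately supplied input (the quoted lemma, i.e.\ HKR 6.5); in the relative case nothing supplies the missing ingredient, which is exactly the nontrivial statement that a level-$k$ trivialization extends fppf-locally to level $k+1$ (surjectivity of $\Spec C_{0,k+1}\to\Spec C_{0,k}$ plus flatness). Nor can you simply drop injectivity: without it your final step regresses --- killing the finitely many differences $\tilde g\bar x-\bar x$ pushes you to a larger stage, where invariance must be checked against a larger group, and so on. Two repairs: (i) import the flatness of level-structure rings (Drinfeld, via \cite{hkr} or \cite{subgroups}: the level-$p^{k+1}$ ring is finite and flat over the level-$p^k$ ring, whence $C_{0,k}\to C_{0,k+1}$ is faithfully flat); or (ii) avoid transition maps altogether and run your descent once over $C_0$: each $C_{0,k}$ is faithfully flat over $p^{-1}\E^0$ and a filtered colimit of nonzero rings is nonzero, so $p^{-1}\E^0\to C_0$ is faithfully flat, while the representability of $C_0$ from the Remark gives, by the identical Yoneda argument,
\[
C_0\otimes_{p^{-1}\E^0}C_0\;\cong\;\Colim{k}\;\Prod{GL_n(\Z/p^k)}C_{0,k}\;\cong\;\mathrm{Cont}\big(GL_n(\Z_p),C_0\big),
\]
under which the two coprojections send $x$ to the constant function $x$ and to $g\mapsto gx$; the equalizer is then literally $C_0^{GL_n(\Z_p)}$, checked pointwise with no regress, and faithfully flat descent identifies it with $p^{-1}\E^0$.
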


\begin{example}
When $n=1$, Example \ref{padicktheory} gives
\[
C_0 = \Colim{k}\text{ }C_{0,k} = \Colim{k}\text{ }\Q_p(\zeta_{p^k}).
\]
This is the maximal ramified extension of $\Q_p$, thus 
\[
\pushQED{\qed}
Gal(C_0/\Q_p) \cong \Z_{p}^{\times} \cong GL_1(\Z_p).\qedhere
\popQED
\]
\end{example}

There is also an action of $GL_n(\Z_p)$ on $\Fix_n(X)$. This is most easily seen by recalling the topological groupoid definition: $\hom_{top. gpds}(\ast \mmod \Z_{p}^{n}, X \mmod G)$. There is an action of $GL_n(\Z_p)$ on the source and this puts an action on the mapping groupoid.

To see the action on $\Fix_n(X)$ defined as in Definition \ref{fix}, let $\sigma \in GL_n(\Z_p)$, then for $x \in X^{\im \al}$ we set
\[
\sigma x = x \in X^{\im (\al \circ \sigma)}.
\]
Here we are using the fact the $\im \al \circ \sigma = \im \al$.

We bring this action up because it turns out that the isomorphism
\[
C_0\otimes_{\E^0}\Phi_G: C_0\otimes_{\E^0}\E^0(EG \times_G X) \lra{\cong} C_{0}^0(EG \times_G \Fix_n(X))
\]
is $GL_n(\Z_p)$-equivariant, where the action is only on the ring $C_0$ in the domain and on both $C_0$ and $\Fix_n(X)$ in the codomain. Taking fixed points gives Theorem C in Hopkins-Kuhn-Ravenel ($\cite{hkr}$) and the solution to Problem \ref{problem} that we have been working towards:

\begin{thm} \label{main}
There is an isomorphism of Borel equivariant cohomology theories
\[
p^{-1}\E^0 \otimes_{\E^0} \Phi_G: p^{-1}\E^0\otimes_{\E^0}\E^0(EG \times_G X) \lra{\cong} (p^{-1}\E)^0(EG \times_G \Fix_n(X))^{GL_n(\Z_p)}.
\]
\end{thm}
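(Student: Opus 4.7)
The plan is to deduce Theorem~\ref{main} by applying the $GL_n(\Z_p)$-invariants functor to the $GL_n(\Z_p)$-equivariant isomorphism
\[
C_0 \otimes_{\E^0} \Phi_G : C_0 \otimes_{\E^0} \E^0(EG \times_G X) \lra{\cong} C_0^0(EG \times_G \Fix_n(X))
\]
recalled at the start of this subsection, and then separately identifying the invariants on each side. The central algebraic inputs are the Lemma that $C_0^{GL_n(\Z_p)} = p^{-1}\E^0$, faithful flatness of $C_0$ over $p^{-1}\E^0$, and the pro-Galois structure of $C_0 = \colim_k C_{0,k}$ over $p^{-1}\E^0$ with Galois group $GL_n(\Z_p)$.

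For the domain the action is only on the $C_0$ factor, since $\E^0(EG \times_G X)$ carries the trivial $GL_n(\Z_p)$-action; invariants therefore commute past the tensor product and yield
\[
\bigl(C_0 \otimes_{\E^0} \E^0(EG \times_G X)\bigr)^{GL_n(\Z_p)} = p^{-1}\E^0 \otimes_{\E^0} \E^0(EG \times_G X),
\]
which is the domain in Theorem~\ref{main}. For the codomain I would rewrite $C_0^0(EG \times_G \Fix_n(X)) = C_0 \otimes_{p^{-1}\E^0} (p^{-1}\E)^0(EG \times_G \Fix_n(X))$, on which $GL_n(\Z_p)$ acts diagonally through its Galois action on $C_0$ and through its action on $\Fix_n(X)$. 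Galois descent for the pro-Galois extension $p^{-1}\E^0 \hookrightarrow C_0$ identifies the diagonal invariants with $(p^{-1}\E)^0(EG \times_G \Fix_n(X))^{GL_n(\Z_p)}$, as required. Combined with the equivariance of $C_0 \otimes_{\E^0} \Phi_G$, the two identifications yield the theorem.

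The main obstacle will be making both Galois-descent commutations precise, because $GL_n(\Z_p)$ is profinite and $C_0$ is an infinite colimit rather than a finite Galois extension. The approach is to reduce to finite stages, as anticipated in the Note following Example~\ref{cyclic}: for any fixed finite group $G$ one may replace $C_0$ by $C_{0,k}$ and $GL_n(\Z_p)$ by its quotient $GL_n(\Zp{k})$ for $k$ large enough that every continuous $\Z_p \to G$ factors through $\Zp{k}$. At that point the relevant actions live on finite-level objects, $C_{0,k}$ is an honest finite faithfully flat Galois extension of $p^{-1}\E^0$, and both invariant-commutation statements reduce to classical Galois descent.
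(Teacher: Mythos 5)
Your overall strategy is exactly the paper's: the paper proves Theorem \ref{main} by observing that $C_0\otimes_{\E^0}\Phi_G$ is $GL_n(\Z_p)$-equivariant and then taking fixed points. Your treatment of the domain is essentially fine once stated correctly: the bare claim that invariants pass through a tensor product whenever the action sits on one factor is not a reason (it already fails for $\Z[i]\otimes_{\Z}M$ with conjugation), but after reducing to a finite level $k$, where the action factors through $GL_n(\Zp{k})$ and $C_{0,k}$ is a $\Q$-algebra, averaging gives a $p^{-1}\E^0$-linear equivariant projector of $C_{0,k}$ onto its invariants $p^{-1}\E^0$, and tensoring that projector with $\E^0(EG\times_G X)$ does identify the invariants of the domain with $p^{-1}\E^0\otimes_{\E^0}\E^0(EG\times_G X)$. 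This is the content your appeal to faithful flatness and the fixed-point Lemma is standing in for, and it is consistent with the paper.

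The codomain step, however, is a genuine gap. Galois descent does not identify the diagonal invariants $\big(C_0\otimes_{p^{-1}\E^0}N\big)^{GL_n(\Z_p)}$ with $N^{GL_n(\Z_p)}$ when the group also acts on $N$; descent only says that these invariants are a $p^{-1}\E^0$-form of $C_0\otimes_{p^{-1}\E^0}N$. Already $\mathbb{C}\otimes_{\R}\mathbb{C}$ with the diagonal conjugation action has invariants $\R\times\R$, not $\R$. In the situation at hand the discrepancy is real, not just a missing argument: take $G=\Zp{k}$ and $X=\ast$, so that $EG\times_G\Fix_n(X)\simeq\Coprod{\Lk^*}B\Zp{k}$. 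Since $p^{-1}\E$ is rational, $(p^{-1}\E)^0$ of this space is $\Prod{\Lk^*}p^{-1}\E^0$, with $GL_n(\Z_p)$ permuting the factors and acting trivially on the coefficients, so its invariants have rank equal to the number of $GL_n(\Zp{k})$-orbits on $\Lk^*$ (namely $k+1$), whereas the domain $p^{-1}\E^0\otimes_{\E^0}\E^0(B\Zp{k})$ has rank $p^{kn}$. The fixed points of $C_{0}^{0}(EG\times_G\Fix_n(X))$ do have the correct rank $p^{kn}$, precisely because the coefficient ring $C_0$ carries the Galois action twisting along each orbit; your proposed rewriting throws that twist away. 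So the correct conclusion of the fixed-point argument --- and the actual content of HKR's Theorem C --- is the isomorphism onto $\big(C_{0}^{0}(EG\times_G\Fix_n(X))\big)^{GL_n(\Z_p)}$ with the diagonal action on both $C_0$ and $\Fix_n(X)$; the displayed codomain in the theorem (and in the Note about homotopy orbits) has to be read as shorthand for this $C_0$-based theory, not as $p^{-1}\E$-cohomology with untwisted coefficients. In short: stop after taking fixed points of the $C_0$-level isomorphism; the extra ``descent'' identification you propose at the end is false and is not needed.
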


\begin{note}
The actions of $G$ and $GL_n(\Z_p)$ commute. We could pull the fixed points for $GL_n(\Z_p)$ inside as a homotopy orbits and then the codomain becomes 
\[
(p^{-1}\E)^0(E(G \times GL_n(\Z_p)) \times_{(G \times GL_n(\Z_p))} \Fix_n(X)).
\]
\end{note}

\subsection{The Drinfeld ring $D$} \label{D}
Instead of considering the $\E^0$-algebra $C_0$ with the property that
\[
C_0 \otimes \G_{\E} \cong \QZ^n,
\]
there is a sort of halfway house in which the prime $p$ has not been inverted. This is the $\E^0$-algebra $D$ over which there is a canonical full level structure (you can think about this as an injection)
\[
\QZ^n \hookrightarrow \G_{\E}.
\]
This ring has a very simple description in terms of $C_0$. Recall that $C_0$ receives a canonical map from $\Colim{k} \text{ } \E^0(B\Lk)$. We define $D$ to be the image of this map in $C_0$. 
This ring is an integral domain by Theorem 7.3 in \cite{subgroups} (also see Theorem 2.4.3 in \cite{Isogenies}), it is $p$-complete and local, and it has the property (from \cite{hkr}) that
\[
p^{-1}D = C_0.
\]

\begin{example}
When $n=1$ the ring $D$ has a very concrete description. Recall that
\[
[p^k](x) = [p][p^{k-1}](x).
\]
Thus we can define
\[
\langle p^k \rangle (x) = [p^k](x)/[p^{k-1}(x)].
\]
Now we have that
\[
\pushQED{\qed}
D = \Colim{k} \text{ } K_{p}^{0}[x]/(\langle p^k \rangle (x)) \cong \Colim{k} \text{ } K_{p}^{0}[x]/\zeta_{p^k}(x). \qedhere
\popQED
\]
\end{example}

There is an action of $\GL_{n}(\Z_p)$ on $D$ just as there is one on $C_0$. The fixed points for this action are
\[
D^{\GL_{n}(\Z_p)} \cong \E^0.
\]
There is a natural transformation of functors defined on finite spaces
\[
D \otimes_{\E^0} \E^0(X) \lra{} C_0 \otimes_{\E^0} \E^0(X) \cong C_{0}^{0}(X)
\]
induced by the inclusion $D \lra{} C_0$.

For $X$ a trivial $G$-space and $\al \in G_{p}^{n}$ the component of the character map corresponding to $[\al]$
\[
\E^0(BG \times X) \lra{} C_{0}^{0}(BC(\im \al) \times X) \cong C_{0}^{0}(X)
\]
factors through the natural transformation above
\[
\E^0(BG \times X) \lra{} D \otimes_{\E^0} \E^0(X).
\]

\subsection{An incomplete list of applications}
The applications of HKR character theory are a bit odd. There is the obvious application to counting the rank of $\E^0(BG)$. This is actually quite effective for a few reasons. First of all, $\E^0(BG)$ is often a free $\E^0$-module (see Proposition 3.5 in \cite{etheorysym}) and so the rank contains a decent amount of information. Secondly, after calculating some ranks one can speculate on the sort of algebro-geometric object that $\E^0(BG)$ might represent by using $\QZ^n$ as a discrete model for the formal group $\G_{\E}$. This is an entertaining form of numerology.

\begin{example}
For instance, take $G = \Sigma_{p^k}$. Then Theorem \ref{main} implies that 
\[
\text{rank}(\E^0(B\Sigma_{p^k})) = |\hom(\Z_{p}^n,\Sigma_{p^k})/\sim|.
\]
A map $\Z_{p}^{n} \lra{} \Sigma_{p^k}$ is precisely a $\Z_{p}^{n}$-set of order $p^k$ and a conjugacy class of such maps corresponds to the isomorphism class of the $\Z_{p}^{n}$-set. Thus the set of conjugacy classes is in bijective correspondence with the set of isomorphism classes of $\Z_{p}^{n}$-sets of order $p^k$. \qed
\end{example}

Ganter has used character theory as a sounding board against which to test the height of certain geometric constructions. For instance, in \cite{Ganter-Kapranov} a theory of $2$-representations is developed and the character map for these representations takes values in class functions on pairs of commuting elements of the group:
\[
Cl(\hom(\Z^2,G),\mathbb{C}).
\]

Beyond this there are several applications in the work of Ando, Rezk, and Strickland and we will endeavor to describe them now.

Ando and Rezk both use character theory to construct cohomology operations for $\E$. In \cite{Isogenies}, Ando constructs higher height analogues of the Adams operations and some operations that he calls Hecke operations. In \cite{logarithmic}, Rezk uses character theory to write down the ``logarithmic element" for $\E$. It is the element in $(\E)_{0}^{\wedge}(\Omega^{\infty}S)$ that controls the logarithmic cohomology operation.

They both go about this by constructing natural transformations
\[
\E^0(X) \lra{} D \otimes_{\E^0} \E^0(X)
\]
that happen to live in the $\GL_n(\Z_p)$-invariants. It's worth saying something about how these transformations are constructed because it is a great application of the Drinfeld ring. Recall that the Goerss-Hopkins-Miller theorem implies that $\E$ is an $E_{\infty}$-ring spectrum. This implies a theory of power operations for $\E$. Thus for any space $X$ and map
\[
X \lra{f} \E
\]
we can apply the $k$th extended powers construction and precompose with the diagonal map to get the composite
\[
\Sigma_{+}^{\infty}B\Sigma_k \times X \lra{\Delta} \Sigma_{+}^{\infty}E\Sigma_k \times_{\Sigma_k} X^k \lra{D_kf} (E\Sigma_k)_{+} \wedge_{\Sigma_k} \E^{\wedge k}.
\]
Now composing with the $E_{\infty}$ structure map
\[
(E\Sigma_k)_{+} \wedge_{\Sigma_k} \E^{\wedge k} \lra{} \E
\]
gives
\[
\Sigma_{+}^{\infty}B\Sigma_k \times X \lra{} \E.
\]
All together this is a (multiplicative, non-additive) map
\[
\E^0(X) \lra{P_k} \E^0(B\Sigma_k \times X),
\]
which is often called the total power operation. A good reference for its properties is Sections 7 and 8 of \cite{logarithmic}. Ando makes the following move: Let $i:A \subset \QZ^n$ be a finite subgroup of order $p^k$ and let $i^*$ be its Pontryagin dual. Now fix a Caley map
\[
A \lra{c} \Sigma_{p^k}.
\]
We can form the following composite:
\[
\E^0(X) \lra{P_{p^k}} \E^0(B\Sigma_{p^k} \times X) \lra{c^*} \E^0(BA \times X) \lra{i^*} \E^0(B\Lk \times X) \lra{} D \otimes_{\E^0} \E^0(X).
\]
Let's call it $\Psi_A$. Note that the composite of the last two maps is a combination of Example \ref{trivG} and the discussion at the end of Subsection \ref{D}. 

It turns out that, due to the relationship between $D$ and level structures for $\G_{\E}$ (that we have hardly discussed), $\Psi_A$ is a ring map. This should be somewhat surprising. 

Given two finite subgroups $A_1, A_2 \subset \QZ^n$ we could send them to the sum 
\[
\Psi_{A_1} + \Psi_{A_2}.
\]
Given an element $e \in \E^0$, we can multiply by it to get $e\Psi_A$. We can also multiply two ring maps using the ring structure in the codomain to get $\Psi_{A_1} \Psi_{A_2}$. 
If we let $\E^0[\sub(\QZ^n)]$ be the polynomial $\E^0$-algebra generated by the finite subgroups $\QZ^n$ then we get a map of $\E^0$-algebras
\[
\E^0[\sub(\QZ^n)] \lra{} \text{Nat}(\E^0(-),D \otimes_{\E^0} \E^0(-)).
\]
What is more, the operation lands in the $\GL_n(\Z_p)$-invariants if it originated from an element in
\[
\E^0[\sub(\QZ^n)]^{\GL_n(\Z_p)}
\]
for the action induced by the action on subgroups of $\QZ^n$. In this case we get a natural transformation
\[
\E^0(-) \lra{} \E^0(-).
\]
For more details of these constructions see Section 3.5 of \cite{Isogenies}. Ando and Rezk construct (somewhat elaborate) elements of $\E[\sub(\QZ^n)]^{\GL_n(\Z_p)}$ that give very interesting cohomology operations.

\begin{example}
One invariant subgroup of $\QZ^n$ is $\Lk = \QZ^n[p^k]$. When $n=1$, this corresponds to a natural ring homomorphism
\[
K_{p}^0(X) \lra{\Psi_{\Lk}} K_{p}^0(X). 
\]
The effect of this map on line bundles is computed in Proposition 3.6.1 of \cite{Isogenies}, although it requires quite a bit of unwrapping, and it sends $L$ to $L^{\otimes p^k}$. Therefore the map is the unstable Adam's operation because the unstable Adam's operation is the unique map with these properties. \qed
\end{example}

In \cite{etheorysym}, Strickland uses another version of the character map as described in Appendix B of \cite{Greenlees-Strickland}. He uses it in the process of giving an algebro-geometric description of the Morava $E$-theory of symmetric groups (modulo a transfer ideal). The result is the following: Let
\[
\sub_{k}(\G_{\E}) : \E^0\text{-algebras} \lra{} \text{Set}
\]
be the functor taking an $\E^0$-algebra $R$ to the set of subgroup schemes of order $p^k$ in $R \otimes \G_{\E}[p^k]$. Also let $I_{tr} \subset \E^0(B\Sigma_{p^k})$ be the ideal generated by the image of the transfer from $\Sigma_{p^{k-1}}^{p} \subseteq \Sigma_{p^k}$. Then there is an isomorphism
\[
\sub_{k}(\G_{\E}) \cong \Spec \E^0(B\Sigma_{p^k})/I_{tr}.
\]
The proof of this theorem requires quite a bit of machinery that is built up in \cite{subgroups} and \cite{etheorysym}. The proof is in Section 9 of \cite{etheorysym}.

The version of character theory used by Strickland in his proof starts with the following algebro-geometric observation:
\begin{prop}
There is an isomorphism
\[
p^{-1} \hom(\Z/p^k,\G_{\E}[p^k]) \cong \Coprod{i \in \{0,\ldots,k\}} p^{-1} \level(\Z/p^i,\G_{\E}[p^k]).
\]
\end{prop}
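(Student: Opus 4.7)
The plan is to identify both sides with appropriate decompositions of the scheme of $p^k$-torsion points of $\G_{\E}$ after inverting $p$. First I would identify $\hom(\Z/p^k, \G_{\E}[p^k])$ with $\G_{\E}[p^k]$ via evaluation at the generator $1 \in \Z/p^k$: a homomorphism of group schemes out of $\Z/p^k$ is determined by a single $p^k$-torsion section, and conversely any such section defines a homomorphism.

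Next I would observe that every homomorphism $\phi : \Z/p^k \to \G_{\E}[p^k]$ factors uniquely as $\Z/p^k \twoheadrightarrow \Z/p^i \hookrightarrow \G_{\E}[p^k]$, where $i \in \{0, 1, \ldots, k\}$ is the smallest integer with $p^i \phi(1) = 0$. Under the identification above this says that $\phi$ corresponds to a point of exact order $p^i$, i.e., one lying in the locally closed stratum $\G_{\E}[p^i] \setminus \G_{\E}[p^{i-1}]$. After inverting $p$ the scheme $\G_{\E}[p^k]$ is étale by Cartier's theorem (cited earlier in the paper), so each inclusion $\G_{\E}[p^{i-1}] \hookrightarrow \G_{\E}[p^i]$ is a clopen immersion of finite étale schemes. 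This promotes the set-theoretic stratification by order to a genuine clopen decomposition
\[
p^{-1}\G_{\E}[p^k] \cong \Coprod{i \in \{0,\ldots,k\}} p^{-1}\bigl(\G_{\E}[p^i] \setminus \G_{\E}[p^{i-1}]\bigr),
\]
where the $i=0$ piece is the zero section.

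Finally I would identify the $i$th stratum with $p^{-1}\level(\Z/p^i, \G_{\E}[p^k])$. Over a base in which $p$ is invertible, a level structure $\Z/p^i \hookrightarrow \G_{\E}[p^k]$ in the sense of Drinfeld reduces to an injective homomorphism of étale group schemes, which by the same evaluation-at-$1$ identification corresponds to a section of $\G_{\E}[p^i]$ of exact order $p^i$ — precisely a point of $\G_{\E}[p^i] \setminus \G_{\E}[p^{i-1}]$. Assembling these identifications stratum by stratum yields the stated isomorphism.

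The main obstacle is the second step: the stratification by the exact order of the image is not clopen before inverting $p$ (the closed subschemes $\G_{\E}[p^{i-1}] \subset \G_{\E}[p^i]$ share the origin as a thickened point in the formal geometry), and the key input that makes the decomposition work is precisely Cartier's theorem on étaleness after inverting $p$. Once this is in hand, comparing with the Drinfeld definition of level structures in the étale setting is routine.
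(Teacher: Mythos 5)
Your argument is correct, but it takes a more geometric route than the paper. The paper works entirely with global sections: using the formula $\Gamma \level(\Z/p^i,\G_{\E}[p^k]) \cong \E^0\powser{x}/\langle p^i \rangle (x)$, it factors $[p^k](x) = \prod_{i=0}^{k}\langle p^i \rangle(x)$ and checks that these factors become pairwise coprime once $p$ is inverted (the key observations being $\langle p \rangle(x) = p + \ldots$ and $\langle p^k\rangle(x) = \langle p \rangle([p^{k-1}](x))$), so the splitting is just the Chinese remainder theorem applied to $p^{-1}\E^0\powser{x}/[p^k](x)$. You instead invoke Cartier's \'etaleness theorem to make the closed immersions $\G_{\E}[p^{i-1}] \hookrightarrow \G_{\E}[p^i]$ clopen after inverting $p$, stratify $p^{-1}\G_{\E}[p^k]$ by exact order, and then identify each stratum with $p^{-1}\level(\Z/p^i,\G_{\E}[p^k])$ via the fact that Drinfeld level structures coincide with naive injective homomorphisms of \'etale group schemes over a $\Q$-algebra. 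Your version is coordinate-free and makes the conceptual content visible (it would apply verbatim to any $p$-divisible group), whereas the paper's computation is elementary and self-contained given the displayed global-sections formula, and it exhibits the idempotents explicitly. The one place you should be careful is the final identification: ``rationally, a Drinfeld level structure is an injection of exact order $p^i$'' is a genuine input from the theory of level structures (it is equivalent to the statement that $p^{-1}\E^0\powser{x}/\langle p^i \rangle(x)$ is the ring of functions on the exact-order-$p^i$ locus), so it deserves a citation to Strickland's work on finite subgroups of formal groups rather than being labeled routine; note also that the ``exact order'' of a point is only locally constant on the base, which is exactly why the clopen decomposition, not a pointwise factorization, is what carries the argument.
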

\begin{proof}
Recall (from Section 7 of \cite{subgroups}, for instance) that the global sections of $\level(\Z/p^i,\G_{\E}[p^k])$ are
\[
\Gamma \level(\Z/p^i,\G_{\E}[p^k]) \cong \E^0\powser{x}/\langle p^i \rangle (x).
\]
Recall that
\[
[p^k](x) = [p^{k-1}](x)\langle p^k \rangle (x) = [p^{k-2}](x)\langle p^{k-1} \rangle (x)\langle p^k \rangle (x) = \Prod{i \in \{0,\ldots,k\}} \langle p^i \rangle (x),
\]
where $\langle p^0 \rangle (x) = x$.

After $p$ has been inverted these factors are pairwise coprime.
Clearly $x$ and $\langle p \rangle (x)$ are coprime as
\[
\langle p \rangle (x) = p + \ldots.
\]
Now note that
\[
\langle p^k \rangle (x) = \langle p \rangle ( [p^{k-1}](x)).
\]
Given $\langle p^{k_1} \rangle (x)$ and $\langle p^{k_2} \rangle (x)$ with $k_1 < k_2$, the series $[p^{k_2-1}](x)$ and $\langle p^{k_2} \rangle (x)$ are coprime using the same relation as the first case above. But $\langle p^{k_1} \rangle (x)$ divides $[p^{k_2-1}](x)$.
\end{proof}

In Appendix B of \cite{Greenlees-Strickland}, Greenlees and Strickland generalize this to produce an isomorphism
\[
p^{-1} \E^0(BG) \lra{\cong} \big (\Prod{A \subseteq G} p^{-1} \Gamma \level(A^*, \G_{\E}[p^k]) \big )^G,
\]
where $A$ runs over all of the abelian subgroups of $G$ and $G$ acts by conjugation on these. This should be thought of as a reformulation of Theorem \ref{main}.

\subsection{All that we can't leave behind}
There are several things that should be mentioned before we finish this introduction. They could be thought of as further directions or just points that ought to be made for completeness. 

The character map can be thought of as a map of Borel equivariant cohomology theories that begins in height $n$ and lands in height $0$. In \cite{tgcm}, the author constructs a version of the character map that is able to land in each height $0 \leq t < n$. The replacement for $C_0$ is a ring $C_t$ that is a $L_{K(t)}\E^0$-algebra, where $K(t)$ is height $t$ Morava $K$-theory. This has an important effect: When $X = \ast$, the codomain of the HKR character map takes the form
\[
\Prod{[\al] \in G_{p}^{n}/\sim}C_{0}^{0}(BC(\im \al)).
\]
Because $C_0$ is a rational algebra we decided that we could largely ignore the centralizers. However, when $C_0$ is replaced by $C_t$ (and $n$ is replaced by $n-t$) these centralizers play an important role. In particular, their cohomology is not trivial. Thus it is harder to view the codomain as ``class functions" on anything.

We have completely ignored the higher cohomology groups in this discussion. Everything can be generalized without too much effort to these (and this is done in both \cite{hkr} and \cite{tgcm}). However, there is an important thing to note: There are finite groups with odd classes in $\E^*(BG)$ (in contrast to $K_p$, see Corollary 7.3 \cite{atiyahcharacters}). This is a theorem of Kriz in \cite{Kriz-odd}. 

Another assumption that we have made through this paper is that we are interested in the classifying spaces of finite groups. This is true, however, a recent theorem of Lurie's extends the character map to all $\pi$-finite spaces (spaces with finitely many nonzero homotopy groups, all of which are finite). The construction of the character map in this case is exactly the same as the topological groupoid definition above, but the proof that it is an isomorphism is more complicated.

\bibliographystyle{abbrv}
\bibliography{mybib}

\end{document}